\title{Fixed Points of  Integral Type Contractions in Uniform Spaces with a Graph\\[0.3cm]}
\author{{Aris Aghanians$^1$\,\,\,and\,\,\,Kourosh Nourouzi$^1$\thanks{Corresponding author}
\thanks{e-mail: nourouzi@kntu.ac.ir; fax: +98 21 22853650}}\\[0.4cm]
{\em $^1$Department of Mathematics, K. N. Toosi University of Technology,}\\
{\em P.O. Box 16315-1618, Tehran, Iran}}
\theoremstyle{plain}
\newtheorem{lem}{Lemma}
\newtheorem{cor}{Corollary}
\newtheorem{thm}{Theorem}
\theoremstyle{definition}
\newtheorem{defn}{Definition}
\newtheorem{exm}{Example}
\theoremstyle{remark}
\newtheorem{rem}{Remark}
\newcommand\myint[1]{\int_0^{#1}\varphi(t){\rm d}t}
\DeclareMathOperator{\fix}{Fix}
\begin{document}
\maketitle \begin{abstract}{In this paper, we discuss the existence of fixed points for integral type contractions in uniform spaces endowed with both a graph and an $E$-distance. We also give two sufficient conditions under which the fixed point is unique. Our main results generalize some recent metric fixed point theorems.\\
{\bf Keywords:} Separated uniform space; integral type $p$-$G$-contraction; fixed point.}
\end{abstract}
\def\thefootnote{ \ }
\footnotetext{{\em} $2010$ Mathematics Subject Classification.
47H10, 05C40.}

\section{Introduction and Preliminaries}
In \cite{bra}, Branciari discussed the existence and uniqueness of fixed points for mappings from a complete metric space $(X,d)$ into itself satisfying a general contractive condition of integral type. The result therein is a generalization of the Banach contraction principle in metric spaces. In fact, Branciari considered mappings $T:(X,d)\to(X,d)$ satisfying
$$\myint{d(Tx,Ty)}\leq\alpha\myint{d(x,y)}\qquad(x,y\in X),$$
where $\alpha\in(0,1)$ and $\varphi:[0,+\infty)\to[0,+\infty)$ is a Lebesgue-integrable function on $[0,+\infty)$ whose Lebesgue-integral is finite on each compact subset of $[0,+\infty)$, and satisfies $\myint{\varepsilon}>0$ for all $\varepsilon>0$. Recently, an integral version of  \'Ciri\'c's contraction was   given in  \cite{sam}.\par
In 2008, Jachymski \cite{jac} generalized the Banach contraction principle in metric spaces endowed with a graph. This idea was followed  by the authors (see \cite{knpan,knfpt}) in uniform spaces. In \cite{aam1}, the concept of an $E$-distance was introduced in uniform spaces as a generalization of a metric and a $w$-distance and then many different nonlinear contractions were generalized from metric to uniform spaces (see, e.g., \cite{agar,knir,ola}).\par
The aim of this paper is to study the existence and uniqueness of a fixed point for integral type contractions in uniform spaces endowed with both a graph and an $E$-distance. Our results generalize Theorem 2.1 in \cite{bra} as well as Corollary 3.1 in \cite{jac} by replacing metric spaces with uniform spaces endowed with a graph and by considering a weaker contractive condition. We also prove an integral version of \cite[Theorems 3.2 and 3.3]{jac}.\par
We begin with notions in uniform spaces that are needed in this paper. For more detailed discussion, the reader is referred to, e.g., \cite{wil}.\par
By a uniform space $(X,\mathscr U)$, shortly denoted here by $X$, it is meant a nonempty set $X$ together with a uniformity $\mathscr U$. For instance, if $d$ is a metric on a nonempty set $X$, then it induces a uniformity, called the uniformity induced by the metric $d$, in which the members of $\mathscr U$ are all the supersets of the sets
$$\big\{(x,y)\in X\times X:d(x,y)<\varepsilon\big\},$$
where $\varepsilon>0$.\par
It is well-known that a uniformity $\mathscr U$ on a nonempty set $X$ is separating if the intersection of all members of $\mathscr U$ is equal to the diagonal of the Cartesian product $X\times X$, that is, the set $\{(x,x):x\in X\}$ which is often denoted by $\Delta(X)$. If $\mathscr U$ is a separating uniformity on a nonempty set $X$, then the uniform space $X$ is said to be separated.\par
We next recall the definition of an $E$-distance on a uniform space $X$ as well as the notions of convergence, Cauchyness and completeness with $E$-distances.

\begin{defn}[\cite{aam1}] Let $X$ be a uniform space. A function $p:X\times X\rightarrow[0,+\infty)$ is called an $E$-distance on $X$ if
\begin{enumerate}[label={\roman*)}]
\item for each member $V$ of $\mathscr U$, there exists a $\delta>0$ such that $p(z,x)\leq\delta$ and $p(z,y)\leq\delta$ imply $(x,y)\in V$ for all $x,y,z\in X$;
\item the triangular inequality holds for $p$, that is,
$$p(x,y)\leq p(x,z)+p(z,y)\qquad(x,y,z\in X).$$
\end{enumerate}
\end{defn}

Let $p$ be an $E$-distance on a uniform space $X$. A sequence $\{x_n\}$ in $X$ is said to be $p$-convergent to a point $x\in X$, denoted by $x_n\stackrel{p}\longrightarrow x$, if it satisfies the usual metric condition, that is, $p(x_n,x)\rightarrow0$ as $n\rightarrow\infty$, and similarly, $p$-Cauchy if it satisfies $p(x_m,x_n)\rightarrow0$ as $m,n\rightarrow\infty$. The uniform space $X$ is called $p$-complete if every $p$-Cauchy sequence in $X$ is $p$-convergent to some point of $X$.\par
In the next lemma, an important property of $E$-distances in separated uniform spaces is formulated.

\begin{lem}[\cite{aam1}]\label{1}
Let $p$ be an $E$-distance on a separated uniform space $X$ and $\{x_n\}$ and $\{y_n\}$ be two arbitrary sequences in $X$. If $x_n\stackrel{p}\longrightarrow x$ and $x_n\stackrel{p}\longrightarrow y$, then $x=y$. In particular, if $x,y\in X$ and $p(z,x)=p(z,y)=0$ for some $z\in X$, then $x=y$.
\end{lem}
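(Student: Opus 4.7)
The plan is to use condition (i) in the definition of an $E$-distance together with the separatedness of $X$. For the first assertion, I fix an arbitrary entourage $V\in\mathscr U$ and apply condition (i) to obtain some $\delta>0$ with the property that $p(z',x')\le\delta$ and $p(z',y')\le\delta$ force $(x',y')\in V$ for all $x',y',z'\in X$. Since $p(x_n,x)\to0$ and $p(x_n,y)\to0$, I can choose an index $N$ large enough so that $p(x_N,x)\le\delta$ and $p(x_N,y)\le\delta$ simultaneously. Taking $z'=x_N$, $x'=x$, $y'=y$ in condition (i) then yields $(x,y)\in V$.

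Because $V$ was an arbitrary member of $\mathscr U$, I conclude that $(x,y)$ lies in every entourage, i.e. $(x,y)\in\bigcap\mathscr U$. Since $X$ is separated, $\bigcap\mathscr U=\Delta(X)$, hence $(x,y)\in\Delta(X)$, which forces $x=y$.

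For the ``in particular'' clause, the fastest route is simply to apply the first assertion to the constant sequence $x_n:=z$: the hypotheses $p(z,x)=0$ and $p(z,y)=0$ immediately yield $x_n\xrightarrow{p}x$ and $x_n\xrightarrow{p}y$, so the first part delivers $x=y$. Alternatively, one can invoke condition (i) directly: for each $V\in\mathscr U$ the corresponding $\delta>0$ satisfies $p(z,x)=0\le\delta$ and $p(z,y)=0\le\delta$, hence $(x,y)\in V$, and separatedness again concludes $x=y$.

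There is no real obstacle here; the proof is essentially an unwinding of the definitions. The only step that requires a moment of care is ensuring that the $\delta$ produced by condition (i) is applied with the roles of the variables matched correctly, namely with $z'=x_N$ playing the role of the ``anchor'' point from which both $x$ and $y$ are within $\delta$ in the $p$-distance.
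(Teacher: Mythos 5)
Your proof is correct. The paper itself gives no proof of this lemma --- it is quoted from the reference \cite{aam1} --- and your argument is the standard direct verification: apply condition (i) of the definition of an $E$-distance with the anchor $z'=x_N$ (or $z'=z$ for the ``in particular'' clause) to place $(x,y)$ in every entourage, then invoke $\bigcap\mathscr U=\Delta(X)$ from separatedness. No gaps.
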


Finally, we recall some concepts about graphs. For more details on graph theory, see, e.g., \cite{bon}.\par
Let $X$ be a uniform space and consider a directed graph $G$ without any parallel edges such that the set $V(G)$ of its vertices is $X$, that is, $V(G)=X$ and the set $E(G)$ of its edges contains all loops, that is, $E(G)\supseteq\Delta(X)$. So the graph $G$ can be simply denoted by $G=(V(G),E(G))$. By $\widetilde G$, it is meant the undirected graph obtained from $G$ by ignoring the direction of the edges of $G$, that is,
$$V(\widetilde G)=X\quad\text{and}\quad E(\widetilde G)=\big\{(x,y)\in X\times X:\text{either}\ (x,y)\ \text{or}\ (y,x)\ \text{belongs to}\ E(G)\big\}.$$\par
A subgraph $H$ of $G$ is itself a directed graph such that $V(H)$ and $E(H)$ are contained in $V(G)$ and $E(G)$, respectively, and $(x,y)\in E(H)$ implies $x,y\in V(H)$ for all $x,y\in X$.\par
We need also a few notions about connectivity of graphs. Suppose that $x$ and $y$ are two vertices in $V(G)$. A finite sequence $(x_i)_{i=0}^N$ consisting of $N+1$ vertices of $G$ is a path in $G$ from $x$ to $y$ if $x_0=x$, $x_N=y$ and $(x_{i-1},x_i)\in E(G)$ for $i=1,\ldots,N$. The graph $G$ is weakly connected if there exists a path in $\widetilde G$ between each two vertices of $\widetilde G$.

\section{Main Results}
In this section, we consider the Euclidean metric on $[0,+\infty)$ and denote by $\lambda$ the Lebesgue measure on the Borel $\sigma$-algebra of $[0,+\infty)$. For a Borel set  $E=[a,b]$, we will use the notation $\int_a^b\varphi(t){\rm d}t$ to show the Lebesgue integral of a function $\varphi$ on $E$. We employ a class $\Phi$ consisting of all functions $\varphi:[0,+\infty)\rightarrow[0,+\infty)$ satisfying the following properties:
\begin{enumerate}[label={$(\Phi\arabic*)$}]
\item $\varphi$ is Lebesgue-integrable on $[0,+\infty)$;
\item The value of the Lebesgue integral $\myint{\varepsilon}$ is positive and finite for all $\varepsilon>0.$
\end{enumerate}\par
The next lemma embodies some important properties of functions of the class $\Phi$ which we need in the sequel.

\begin{lem}\label{phi}
Let $\varphi:[0,+\infty)\to[0,+\infty)$ be a function in the class $\Phi$ and $\{a_n\}$ be a sequence of nonnegative real numbers. Then the following statements hold:
\begin{enumerate}[label={\rm\arabic*.}]
\item If $\myint{a_n}\rightarrow0$ as $n\rightarrow\infty$, then $a_n\rightarrow0$ as $n\rightarrow\infty$.
\item If $\{a_n\}$ is monotone and converges to some $a\geq0$, then $\myint{a_n}\rightarrow\myint a$ as $n\rightarrow\infty$.
\end{enumerate}
\end{lem}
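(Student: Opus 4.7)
The plan is to handle the two statements separately, using elementary measure-theoretic tools: the positivity property $(\Phi 2)$ for the first, and the monotone/dominated convergence theorems for the second.

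For statement 1, I would argue by contradiction. Suppose $a_n \not\to 0$. Then there exist $\varepsilon > 0$ and a subsequence $\{a_{n_k}\}$ with $a_{n_k} \geq \varepsilon$ for all $k$. Since $\varphi \geq 0$, the integral is monotone with respect to the length of the interval of integration, and therefore
$$\int_0^{a_{n_k}}\varphi(t)\,{\rm d}t \;\geq\; \int_0^{\varepsilon}\varphi(t)\,{\rm d}t.$$
By $(\Phi 2)$ the right-hand side is a strictly positive constant, so the left-hand side cannot converge to $0$, contradicting the hypothesis. Hence $a_n \to 0$.

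For statement 2, I would rewrite the integrals by means of indicator functions, $\int_0^{a_n}\varphi(t)\,{\rm d}t = \int_{[0,+\infty)} \chi_{[0,a_n]}(t)\,\varphi(t)\,{\rm d}\lambda(t)$, and split into the two monotone cases. If $\{a_n\}$ is non-decreasing with limit $a$, then the sequence of nonnegative functions $\chi_{[0,a_n]}\varphi$ is pointwise non-decreasing and converges to $\chi_{[0,a]}\varphi$ $\lambda$-almost everywhere (the point $a$ itself being a null set), so the monotone convergence theorem applies and gives the desired limit. If instead $\{a_n\}$ is non-increasing with limit $a$, then $\chi_{[0,a_n]}\varphi \leq \chi_{[0,a_1]}\varphi$ for every $n$; the dominating function $\chi_{[0,a_1]}\varphi$ has finite integral (by $(\Phi 2)$ when $a_1 > 0$, or trivially when $a_1 = 0$, in which case the whole sequence is constantly zero), so the dominated convergence theorem yields the conclusion.

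There is no real obstacle here; the only care needed is handling the endpoints (the limit function differs from $\chi_{[0,a]}\varphi$ at most at the single point $a$, which has Lebesgue measure zero) and ensuring that an integrable dominating function is available in the decreasing case, which $(\Phi 1)$--$(\Phi 2)$ readily provide.
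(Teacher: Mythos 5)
Your proof is correct and follows essentially the same route as the paper: part 1 rests on the positivity in $(\Phi2)$ together with monotonicity of the integral over nested intervals (your single contradiction argument is in fact a streamlined version of the paper's two-step one, which first rules out unboundedness of $\{a_n\}$ before treating a positive $\limsup$), and part 2 is the paper's continuity-from-below/above of the measure $E\mapsto\int_E\varphi\,{\rm d}\lambda$ recast as the monotone and dominated convergence theorems applied to $\chi_{[0,a_n]}\varphi$. There are no gaps; your handling of the endpoint $a$ as a null set and of the integrable dominating function in the nonincreasing case matches the role $(\Phi2)$ plays in the paper's argument.
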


\begin{proof}
1. Let $\myint{a_n}\rightarrow0$ and suppose first on the contrary that $\limsup_{n\rightarrow\infty}a_n=\infty$. Then $\{a_n\}$ contains a subsequence $\{a_{n_k}\}$ which diverges to $\infty$. By passing to a subsequence if necessary, one may assume without loss of generality that $\{a_{n_k}\}$ is a nondecreasing subsequence of $\{a_n\}$. Because the sequence $\{\myint{a_{n_k}}\}$ of nonnegative numbers increases to zero,  so $a_{n_k}=0$ for all $k\geq1$. This is a  contradiction and therefore the sequence $\{a_n\}$ is bounded.\par
Next, if $\limsup_{n\rightarrow\infty}a_n=\varepsilon>0$, then there exists a strictly increasing sequence $\{n_k\}$ of positive integers such that $a_{n_k}\rightarrow\varepsilon$. Pick an integer $k_0>0$ so that the strict inequality $a_{n_k}>\frac\varepsilon2$ holds for all $k\geq k_0$. Therefore,
$$0<\myint{\frac\varepsilon2}\leq\myint{a_{n_k}}\rightarrow0,$$
which is again a contradiction. So $\limsup_{n\rightarrow\infty}a_n=0$, and consequently,
$$0\leq\liminf_{n\rightarrow\infty}a_n\leq\limsup_{n\rightarrow\infty}a_n=0,$$
that is, $a_n\rightarrow0$.\\
2. Let $\{a_n\}$ be nondecreasing and put $E_n=[0,a_n]$ for all $n\geq1$. Then each $E_n$ is a Borel subset of $[0,+\infty)$ and we have $E_1\subseteq E_2\subseteq\cdots$ and $\bigcup_{n=1}^\infty E_n=[0,a]$. Because the function $E\stackrel{\mu}\longmapsto\int_E\varphi{\rm d}\lambda$ is a Borel measure on $[0,+\infty)$, using the continuity of $\mu$ from below we get
$$\myint a=\mu\Big(\bigcup_{n=1}^\infty E_n\Big)=\lim_{n\rightarrow\infty}\mu(E_n)=\lim_{n\rightarrow\infty}\myint{a_n}.$$
A similar argument is true if $\{a_n\}$ is nonincreasing since each $E_n$ defined above is of finite $\mu$-measure by $(\Phi2)$.
\end{proof}

Let $T$ be a mapping from a uniform space $X$ endowed with a graph $G$ into itself. We denote as usual the set of all fixed points for $T$ by $\fix(T)$, and by $C_T$, we mean the set of all $x\in X$ such that $(T^nx,T^mx)$ is an edge of $\widetilde G$ for all $m,n\geq0$. Clearly, $\fix(T)\subseteq C_T$.

\begin{defn}
Let $p$ be an $E$-distance on a uniform space $X$ endowed with a graph $G$. We say that a mapping $T:X\to X$ is an integral type $p$-$G$-contraction if
\begin{enumerate}[label={IC\arabic*)}]
\item $T$ preserves the egdes of $G$, that is, $(x,y)\in E(G)$ implies $(Tx,Ty)\in E(G)$ for all $x,y\in X$;
\item there exists a $\varphi\in\Phi$ and a constant $\alpha\in(0,1)$ such that the contractive condition
$$\myint{p(Tx,Ty)}\leq\alpha\myint{p(x,y)}$$
holds for all $x,y\in X$ with $(x,y)\in E(G)$.
\end{enumerate}
\end{defn}

Now, we give some examples of integral type $p$-$G$-contractions.

\begin{exm}
Let $p$ be an $E$-distance on a uniform space $X$ endowed with a graph $G$ and $x_0$ be a point in $X$ such that $p(x_0,x_0)=0$. Since $E(G)$ contains the loop $(x_0,x_0)$, it follows that the constant mapping $T=x_0$ preserves the edges of $G$, and since $p(x_0,x_0)=0$, (IC2) holds trivially for any arbitrary $\varphi\in\Phi$ and $\alpha\in(0,1)$. Therefore, $T$ is an integral type $p$-$G$-contraction. In particular, each constant mapping on $X$ is an integral type $p$-$G$-contraction if and only if $p(x,x)=0$ for all $x\in X$.
\end{exm}

\begin{exm}
Let $(X,d)$ be a metric space and $T:X\to X$  a mapping satisfying
$$\myint{d(Tx,Ty)}\leq\alpha\myint{d(x,y)}\qquad(x,y\in X),$$
where $\varphi\in\Phi$ and $\alpha\in(0,1)$. If we consider $X$ as a uniform space with the uniformity induced by the metric $d$, then $T$ is an integral type $d$-$G_0$-contraction, where $G_0$ is the complete graph with the vertices set $X$, that is, $V(G_0)=X$ and $E(G_0)=X\times X$. The existence and uniqueness of fixed point for these kind of contractions were considered by Branciari in \cite{bra}.
\end{exm}

\begin{exm}
Let $\preceq$ and $p$ be a partial order and an $E$-distance on a uniform space $X$, respectively, and consider the poset graphs $G_1$ and $G_2$ by
$$V(G_1)=X\quad\text{and}\quad E(G_1)=\big\{(x,y)\in X\times X:x\preceq y\big\},$$
and
$$V(G_2)=X\quad\text{and}\quad E(G_2)=\big\{(x,y)\in X\times X:x\preceq y\vee y\preceq x\big\}.$$
Then integral type $p$-$G_1$-contractions are precisely the ordered integral type $p$-contractions, that is, nondecreasing mappings $T:X\to X$ which satisfy (IC2) for all $x,y\in X$ with $x\preceq y$ and for some $\varphi\in\Phi$ and $\alpha\in(0,1)$. And integral type $p$-$G_2$-contractions are those mappings $T:X\to X$ which are order preserving and satisfy (IC2) for all comparable $x,y\in X$ and for some $\varphi\in\Phi$ and $\alpha\in(0,1)$.
\end{exm}

\begin{rem}
Let $T$ be a mapping from an arbitrary uniform space $X$ into itself. If $X$ is endowed with the complete graph $G_0$, then the set $C_T$ coincides with $X$.\par
If $\preceq$ is a partial order on $X$ and $X$ is endowed with either $G_1$ or $G_2$, then a point $x\in X$ belongs to $C_T$ if and only if $T^nx$ is comparable to $T^mx$ for all $m,n\geq0$. In particular, if $T$ is monotone, then each $x\in X$ satisfying $x\preceq Tx$ or $Tx\preceq x$ belongs to $C_T$.
\end{rem}

\begin{exm}
Let $p$ be any arbitrary $E$-distance on a uniform space $X$ endowed with a graph $G$ and define a function $\varphi:[0,+\infty)\to[0,+\infty)$ by the rule $\varphi(t)=t^\beta$ for all $t\geq0$, where $\beta\geq0$ is constant. It is clear that $\varphi$ is Lebesgue-integrable on $[0,+\infty)$ and $\myint{\varepsilon}=\frac{\varepsilon^{1+\beta}}{1+\beta}$ which is positive and finite for all $\varepsilon>0$, that is, $\varphi\in\Phi$. Now, let a mapping $T:X\to X$ satisfy $p(Tx,Ty)\leq\alpha p(x,y)$ for all $x,y\in X$ with $(x,y)\in E(G)$, where $\alpha\in(0,1)$. Then $T$ satisfies (IC2) for the function $\varphi$ defined as above and the number $\alpha^{1+\beta}\in(0,1)$. In fact, if $x,y\in X$ and $(x,y)\in E(G)$, then
$$\myint{p(Tx,Ty)}=\frac{p(Tx,Ty)^{1+\beta}}{1+\beta}\leq\alpha^{1+\beta}\cdot\frac{p(x,y)^{1+\beta}}{1+\beta}=\alpha^{1+\beta}\myint{p(x,y)}.$$
Therefore, our contraction generalizes Banach's contraction with $E$-distances in uncountably many ways. In particular,
if $T$ is a Banach $G$-$p$-contraction (i.e., the Banach contraction in uniform spaces endowed with an $E$-distance and a graph), then $T$ is an integral type $p$-$G$-contraction for uncountably many functions $\varphi\in\Phi$.
\end{exm}

To prove the existence of a fixed point for an integral type $p$-$\widetilde G$-contraction, we need the following two lemmas:

\begin{lem}\label{step1}
Let $p$ be an $E$-distance on a uniform space $X$ endowed with a graph $G$ and $T:X\to X$ be an integral type $p$-$G$-contraction. Then $p(T^nx,T^ny)\rightarrow0$ as $n\rightarrow\infty$, for all $x,y\in X$ with $(x,y)\in E(G)$.
\end{lem}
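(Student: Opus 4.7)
The plan is to iterate the contractive inequality (IC2) and then apply Lemma \ref{phi}(1) to transfer the conclusion from the integral level back to the argument $p(T^n x, T^n y)$ itself.

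First I would fix $x, y \in X$ with $(x,y) \in E(G)$ and observe, by straightforward induction on $n$ using (IC1), that $(T^n x, T^n y) \in E(G)$ for every $n \geq 0$. This is the key structural ingredient: without edge preservation, (IC2) could not be applied along the orbit.

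Next, applying (IC2) at each stage, I would chain the inequalities
\[
\myint{p(T^n x, T^n y)} \leq \alpha \myint{p(T^{n-1} x, T^{n-1} y)} \leq \cdots \leq \alpha^n \myint{p(x,y)}.
\]
By $(\Phi 2)$, the integral $\myint{p(x,y)}$ is finite (and zero if $p(x,y)=0$), so the right-hand side tends to $0$ as $n \to \infty$ since $\alpha \in (0,1)$. Therefore $\myint{p(T^n x, T^n y)} \to 0$.

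Finally, setting $a_n = p(T^n x, T^n y)$, Lemma \ref{phi}(1) yields $a_n \to 0$, which is exactly the claim. I do not anticipate a genuine obstacle here; the only point to be slightly careful about is the degenerate case $p(x,y) = 0$, where the right-hand side is already zero for every $n$, forcing $p(T^n x, T^n y) = 0$ via $(\Phi 2)$, and the conclusion holds trivially.
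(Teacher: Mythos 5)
Your proposal is correct and follows essentially the same route as the paper's own proof: preserve edges along the orbit via (IC1), iterate (IC2) to get $\myint{p(T^nx,T^ny)}\leq\alpha^n\myint{p(x,y)}\rightarrow0$ using the finiteness from $(\Phi2)$, and conclude via Lemma \ref{phi}(1). The extra remark about the degenerate case $p(x,y)=0$ is harmless but unnecessary, since the general argument already covers it.
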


\begin{proof}
Let $x,y\in X$ be such that $(x,y)\in E(G)$. According to Lemma \ref{phi}, it suffices to show that $\myint{p(T^nx,T^ny)}\rightarrow0$, where $\varphi\in\Phi$ is as in (IC2). To this end, note that because $T$ preserves the edges of $G$, we have $(T^nx,T^ny)\in E(G)$ for all $n\geq0$, and so by (IC2), we find
$$\myint{p(T^nx,T^ny)}\leq\alpha\myint{p(T^{n-1}x,T^{n-1}y)}\leq\cdots\leq\alpha^n\myint{p(x,y)}\qquad(n\geq1),$$
where $\alpha\in(0,1)$ is as in (IC2). Since, by $(\Phi2)$, $\myint{p(x,y)}$ is finite (even possibly zero), it follows immediately that $\myint{p(T^nx,T^ny)}\rightarrow0$.
\end{proof}

\begin{lem}\label{step2}
Let $p$ be an $E$-distance on a uniform space $X$ endowed with a graph $G$ and $T:X\to X$ be an integral type $p$-$\widetilde G$-contraction. Then the sequence $\{T^nx\}$ is $p$-Cauchy for all $x\in C_T$.
\end{lem}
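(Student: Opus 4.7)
The plan is to proceed by contradiction in the spirit of Branciari's classical argument, using the integral contractive condition along edges of $\widetilde G$.

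First I would show that consecutive distances vanish. Since $x\in C_T$, the pair $(x,Tx)$ lies in $E(\widetilde G)$; applying Lemma~\ref{step1} (with the graph $\widetilde G$ and $y=Tx$) yields $p(T^nx,T^{n+1}x)\to 0$, and the symmetry of $E(\widetilde G)$ gives $p(T^{n+1}x,T^nx)\to 0$ as well. Then suppose, toward a contradiction, that $\{T^nx\}$ is not $p$-Cauchy: there exist $\varepsilon>0$ and indices $n_k,m_k$ with $m_k>n_k$ and $n_k\to\infty$ for which $p(T^{n_k}x,T^{m_k}x)\geq\varepsilon$ (the alternative $p(T^{m_k}x,T^{n_k}x)\geq\varepsilon$ is handled identically, since $\widetilde G$ is undirected and (IC2) is available in either orientation). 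Taking each $m_k$ to be the smallest integer $>n_k$ with this property gives $p(T^{n_k}x,T^{m_k-1}x)<\varepsilon$.

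The triangle inequality then sandwiches
$$\varepsilon\le p(T^{n_k}x,T^{m_k}x)\le p(T^{n_k}x,T^{m_k-1}x)+p(T^{m_k-1}x,T^{m_k}x)<\varepsilon+p(T^{m_k-1}x,T^{m_k}x),$$
so $p(T^{n_k}x,T^{m_k}x)\to\varepsilon$ by the first step. A further double application of the triangle inequality, combined with the vanishing of both $p(T^nx,T^{n+1}x)$ and $p(T^{n+1}x,T^nx)$, similarly forces $p(T^{n_k+1}x,T^{m_k+1}x)\to\varepsilon$. Since $x\in C_T$ makes $(T^{n_k}x,T^{m_k}x)$ an edge of $\widetilde G$, (IC2) yields
$$\myint{p(T^{n_k+1}x,T^{m_k+1}x)}\le\alpha\myint{p(T^{n_k}x,T^{m_k}x)}.$$
Letting $k\to\infty$ and invoking part~2 of Lemma~\ref{phi} on each side produces $\myint{\varepsilon}\le\alpha\myint{\varepsilon}$; since $\myint{\varepsilon}>0$ by $(\Phi 2)$ and $\alpha\in(0,1)$, this is the desired contradiction.

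The main obstacle is exactly this last limit passage: part~2 of Lemma~\ref{phi} is stated only for monotone convergent sequences, whereas neither $\{p(T^{n_k}x,T^{m_k}x)\}$ nor $\{p(T^{n_k+1}x,T^{m_k+1}x)\}$ need be monotone. The remedy is a standard two-step extraction: first pass to a subsequence along which $\{p(T^{n_k}x,T^{m_k}x)\}$ is monotone, then from that extract a further sub-subsequence along which $\{p(T^{n_k+1}x,T^{m_k+1}x)\}$ is also monotone; both sequences then satisfy the hypothesis of Lemma~\ref{phi}, and the inequality survives in the limit.
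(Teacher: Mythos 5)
Your proposal is correct and follows essentially the same route as the paper: negate $p$-Cauchyness, choose $m_k$ minimal so that the triangle inequality forces $p(T^{n_k}x,T^{m_k}x)\to\varepsilon$ and $p(T^{n_k+1}x,T^{m_k+1}x)\to\varepsilon$, apply (IC2) along the edge $(T^{n_k}x,T^{m_k}x)$ of $\widetilde G$, and pass to the limit via Lemma~\ref{phi} to get $\myint{\varepsilon}\le\alpha\myint{\varepsilon}$. The monotonicity issue you flag is handled in the paper exactly as you propose, by extracting nested subsequences along which both distance sequences become monotone.
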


\begin{proof}
Suppose on the contrary that $\{T^nx\}$ is not $p$-Cauchy for some $x\in C_T$. Then there exist an $\varepsilon>0$ and positive integers $m_k$ and $n_k$ such that
$$m_k>n_k\geq k\quad\text{and}\quad p(T^{m_k}x,T^{n_k}x)\geq\varepsilon\qquad k=1,2,\ldots\,.$$
If the integer $n_k$ is kept fixed for sufficiently large indices $k$ (say, $k\geq k_0$), then using Lemma \ref{step1}, one may assume without loss of generality that $m_k>n_k$ is the smallest integer with $p(T^{m_k}x,T^{n_k}x)\geq\varepsilon$, that is,
$$p(T^{m_k-1}x,T^{n_k}x)<\varepsilon\qquad(k\geq k_0).$$
Hence we have
\begin{eqnarray*}
\varepsilon&\leq&p(T^{m_k}x,T^{n_k}x)\cr
&\leq&p(T^{m_k}x,T^{m_k-1}x)+p(T^{m_k-1}x,T^{n_k}x)\cr
&<&p(T^{m_k}x,T^{m_k-1}x)+\varepsilon
\end{eqnarray*}
for each $k\geq k_0$. Since $x\in C_T$, it follows that $(Tx,x)\in E(\widetilde G)$ and by Lemma \ref{step1}, we have $p(T^{m_k}x,T^{m_k-1}x)\rightarrow0$. Thus, letting $k\rightarrow\infty$ yields $p(T^{m_k}x,T^{n_k}x)\rightarrow\varepsilon$. On the other hand, we have
$$p(T^{m_k+1}x,T^{n_k+1}x)\leq p(T^{m_k+1}x,T^{m_k}x)+p(T^{m_k}x,T^{n_k}x)+p(T^{n_k}x,T^{n_k+1}x)$$
for all $k\geq1$. Letting $k\rightarrow\infty$, since $(Tx,x),(x,Tx)\in E(\widetilde G)$, it follows by Lemma \ref{step1} that
$$\limsup_{k\rightarrow\infty}p(T^{m_k+1}x,T^{n_k+1}x)\leq\varepsilon.$$
Moreover, the inequality
$$p(T^{m_k+1}x,T^{n_k+1}x)\geq p(T^{m_k}x,T^{n_k}x)-p(T^{m_k}x,T^{m_k+1}x)-p(T^{n_k+1}x,T^{n_k}x)$$
holds for all $k\geq1$. Thus, similarly we have
$$\liminf_{k\rightarrow\infty}p(T^{m_k+1}x,T^{n_k+1}x)\geq\varepsilon.$$
Hence, $p(T^{m_k+1}x,T^{n_k+1}x)\rightarrow\varepsilon$. By passing to two subsequences with the same choice function if necessary, one may assume without loss of generality that both $\{p(T^{m_k}x,T^{n_k}x)\}$ and $\{p(T^{m_k+1}x,T^{n_k+1}x)\}$ are monotone. Therefore, using Lemma \ref{phi} twice, we have
$$\myint{\varepsilon}=\lim_{k\rightarrow\infty}\myint{p(T^{m_k+1}x,T^{n_k+1}x)} \leq\alpha\lim_{k\rightarrow\infty}\myint{p(T^{m_k}x,T^{n_k}x)}=\alpha\myint{\varepsilon},$$
where $\varphi\in\Phi$ and $\alpha\in(0,1)$ are as in (IC2). Therefore, $\myint{\varepsilon}=0$, which is a contradiction. Consequently, the sequence $\{T^nx\}$ is $p$-Cauchy for all $x\in C_T$.
\end{proof}

\begin{defn}
Let $p$ be an $E$-distance on a uniform space $X$ endowed with a graph $G$ and $T$ be a mapping from $X$ into itself. We say that
\begin{enumerate}[label={\roman*)}]
\item $T$ is orbitally $p$-$G$-continuous on $X$ if for all $x,y\in X$ and all sequences $\{a_n\}$ of positive integers with $(T^{a_n}x,T^{a_{n+1}}x)\in E(G)$ for $n=1,2,\ldots$, $T^{a_n}x\stackrel{p}\longrightarrow y$ as $n\rightarrow\infty$, implies $T(T^{a_n}x)\stackrel{p}\longrightarrow Ty$ as $n\rightarrow\infty$.
\item $T$ is a $p$-Picard operator if $T$ has a unique fixed point $u\in X$ and $T^nx\stackrel{p}\longrightarrow u$ for all $x\in X$.
\item $T$ is a weakly $p$-Picard operator if $\{T^nx\}$ is $p$-convergent to a fixed point of $T$ for all $x\in X$.
\end{enumerate}
\end{defn}

It is clear that each $p$-Picard operator is weakly $p$-Picard. Moreover, a weakly $p$-Picard operator is $p$-Picard if and only if its fixed point is unique.

\begin{exm}
Let $X$ be any arbitrary uniform space with more than one point equipped with an $E$-distance $p$. Choose a nonempty proper subset $A$ of $X$ and pick $a$ and $b$ from $A$ and $A^c$, respectively. Then the mapping $T:X\to X$ defined by $Tx=a$ if $x\in A$, and $Tx=b$ if $x\notin A$ is a weakly $p$-Picard operator which fails to be $p$-Picard. In fact, we have $\fix(T)=\{a,b\}$. Therefore, a weakly $p$-Picard operator is not necessarily $p$-Picard.
\end{exm}

Now, we are ready to prove our main theorems. The first result guarantees the existence of a fixed point when an integral type $p$-$\widetilde G$-contraction is orbitally $p$-$\widetilde G$-continuous on $X$ or the triple $(X,p,G)$ has a certain property.

\begin{thm}\label{contfix}
Let $p$ be an $E$-distance on a separated uniform space $X$ endowed with a graph $G$ such that $X$ is $p$-complete, and $T:X\to X$ be an integral type $p$-$\widetilde G$-contraction. Then $T\mid_{C_T}$ is a weakly $p$-Picard operator if one of the following statements holds:
\begin{enumerate}[label={\rm\roman*)}]
\item $T$ is orbitally $p$-$\widetilde G$-continuous on $X$;
\item The triple $(X,p,G)$ satisfies the following property:
\begin{itemize}[label={$(\ast)$}]
\item If a sequence $\{x_n\}$ in $X$ is $p$-convergent to an $x\in X$ and satisfies $(x_n,x_{n+1})\in E(\widetilde G)$ for all $n\geq1$, then there exists a subsequence $\{x_{n_k}\}$ of $\{x_n\}$ such that $(x_{n_k},x)\in E(\widetilde G)$ for all $k\geq1$.
\end{itemize}
\end{enumerate}
In particular, having been held {\rm(i)} or {\rm(ii)}, $\fix(T)\ne\emptyset$ if and only if $C_T\ne\emptyset$.
\end{thm}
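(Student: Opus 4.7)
My plan is to fix an arbitrary $x \in C_T$ and show that the orbit $\{T^nx\}$ $p$-converges to some point of $\fix(T)$. I would first observe that $C_T$ is $T$-invariant: the defining property $(T^nx, T^mx) \in E(\widetilde G)$ for all $m,n \ge 0$ is preserved under replacing $x$ by $Tx$, so $T|_{C_T}$ is a well-defined self-map of $C_T$, and any fixed point produced by the argument will belong to $C_T$ automatically since loops lie in $E(G)$. Because $x \in C_T$, Lemma \ref{step2} gives that $\{T^nx\}$ is $p$-Cauchy, so $p$-completeness produces a $p$-limit $x^* \in X$, unique by separatedness and Lemma \ref{1}. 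All that remains is to check $Tx^* = x^*$ in each of the two cases.

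Under hypothesis (i), I would invoke orbital $p$-$\widetilde G$-continuity with the index sequence $a_n = n$: the required edge condition $(T^nx, T^{n+1}x) \in E(\widetilde G)$ is part of $x \in C_T$, and the conclusion $T^{n+1}x \stackrel{p}\longrightarrow Tx^*$ combined with $T^{n+1}x \stackrel{p}\longrightarrow x^*$ gives $Tx^* = x^*$ by Lemma \ref{1}. Under hypothesis (ii), I would first apply $(\ast)$ to $\{T^nx\}$ to extract a subsequence with $(T^{n_k}x, x^*) \in E(\widetilde G)$, and then apply (IC2) along this subsequence to obtain
$$\myint{p(T^{n_k+1}x, Tx^*)} \le \alpha \myint{p(T^{n_k}x, x^*)}.$$
Since $p(T^{n_k}x, x^*) \to 0$ but need not be monotone, I would thin the subsequence once more so that $\{p(T^{n_k}x, x^*)\}$ is monotone (any null sequence of nonnegative reals admits a decreasing subsequence). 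Lemma \ref{phi}(2) then sends the right-hand side to $0$, so the left-hand side as well, and Lemma \ref{phi}(1) converts this into $p(T^{n_k+1}x, Tx^*) \to 0$. Comparing with the fact that $T^{n_k+1}x \stackrel{p}\longrightarrow x^*$ and applying Lemma \ref{1} once more yields $Tx^* = x^*$.

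For the \emph{in particular} clause, the inclusion $\fix(T) \subseteq C_T$ is immediate (and already noted), while the reverse implication is exactly what the weakly $p$-Picard conclusion delivers: if $C_T \neq \emptyset$, pick any $x \in C_T$ and the $p$-limit of $\{T^nx\}$ is a fixed point. The main obstacle is case (ii), where the input $p(T^{n_k}x, x^*) \to 0$ cannot be plugged directly into Lemma \ref{phi}(2) because that lemma demands monotonicity; the bridging trick is the secondary thinning to a monotone subsequence, paired with Lemma \ref{phi}(1) at the end to translate integral decay on the left-hand side of (IC2) back into genuine $E$-distance decay for $\{p(T^{n_k+1}x, Tx^*)\}$.
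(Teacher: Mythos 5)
Your proposal is correct and follows essentially the same route as the paper's proof: Lemma \ref{step2} plus $p$-completeness to get the limit, orbital continuity with $a_n=n$ in case (i), and in case (ii) the extraction via $(\ast)$ followed by a second thinning to make $\{p(T^{n_k}x,x^*)\}$ monotone so that Lemma \ref{phi} applies on both sides of (IC2). The only cosmetic difference is that you spell out the ``in particular'' clause slightly more explicitly than the paper does.
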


\begin{proof}
If $C_T=\emptyset$, then there is nothing to prove. Otherwise, note first that since $T$ preserves the edges of $\widetilde G$, it follows that $C_T$ is $T$-invariant, that is, $T$ maps $C_T$ into itself. Now, let $x\in C_T$ be given. Then $(T^nx,T^{n+1}x)\in E(\widetilde G)$ for all $n\geq0$. Moreover, by Lemma \ref{step2}, the sequence $\{T^nx\}$ is $p$-Cauchy in $X$, and because $X$ is $p$-complete, there exists a $u\in X$ (depends on $x$) such that $T^nx\stackrel{p}\longrightarrow u$.\par
To prove the existence of a fixed point for $T$, suppose first that $T$ is orbitally $p$-$\widetilde G$-continuous. Then $T^{n+1}x\stackrel{p}\longrightarrow Tu$ and because $X$ is separated, Lemma \ref{1} ensures that $Tu=u$, that is, $u$ is a fixed point for $T$.\par
On the other hand, if Property $(\ast)$ holds, then $\{T^nx\}$ contains a subsequence $\{T^{n_k}x\}$ such that $(T^{n_k}x,u)\in E(\widetilde G)$ for all $k\geq1$. Since $p(T^{n_k}x,u)\rightarrow0$, by passing to a subsequence if necessary, one may assume without loss of generality that $\{p(T^{n_k}x,u)\}$ is monotone. Hence by Lemma \ref{phi}, we have
$$\myint{p(T^{{n_k}+1}x,Tu)}\leq\alpha\myint{p(T^{n_k}x,u)}\rightarrow0\quad\text{as}\quad k\rightarrow\infty,$$
where $\alpha\in(0,1)$ is as in (IC2). Using Lemma \ref{phi} once more, one obtains $p(T^{{n_k}+1}x,Tu)\rightarrow0$ and since $X$ is separated, Lemma \ref{1} guarantees that $Tu=u$, that is, $u$ is a fixed point for $T$.\par
Finally, $u\in\fix(T)\subseteq C_T$, and so $T\mid_{C_T}$ is a weakly $p$-Picard operator.
\end{proof}

Setting $G=G_0$ in Theorem \ref{contfix}, we have the following result, which is a generalization of \cite[Theorem 2.1]{bra} to uniform spaces equipped with an $E$-distance.

\begin{cor}
Let $p$ be an $E$-distance on a separated uniform space $X$ such that $X$ is $p$-complete. Let   $T:X\to X$ satisfy
$$\myint{p(Tx,Ty)}\leq\alpha\myint{p(x,y)}\qquad(x,y\in X),$$
where $\varphi\in\Phi$ and $\alpha\in(0,1)$. Then $T$ is a $p$-Picard operator.
\end{cor}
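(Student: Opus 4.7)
The plan is to derive this as a direct specialization of Theorem \ref{contfix}, then upgrade "weakly $p$-Picard" to "$p$-Picard" by establishing uniqueness.

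First I would verify that the hypotheses of Theorem \ref{contfix} hold with $G=G_0$, the complete graph on $X$. Since $E(G_0)=X\times X$, condition (IC1) is vacuous and the stated contractive hypothesis is exactly (IC2) for all edges; hence $T$ is an integral type $p$-$G_0$-contraction, and $\widetilde{G_0}=G_0$. Moreover, $C_T=X$ by the remark following the definition of $C_T$. Property $(\ast)$ of Theorem \ref{contfix}(ii) is also trivially satisfied: for any sequence $\{x_n\}$ that $p$-converges to $x$, every pair $(x_n,x)$ is already in $E(\widetilde{G_0})$. Thus Theorem \ref{contfix} applies, and $T=T\mid_{C_T}$ is a weakly $p$-Picard operator; in particular, for every $x\in X$, the sequence $\{T^nx\}$ is $p$-convergent to some fixed point of $T$.

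It remains to prove uniqueness of the fixed point, which is the only real content not already packaged in Theorem \ref{contfix}. Suppose $u,v\in\fix(T)$. Applying the contractive hypothesis to the pair $(u,v)$ gives
$$\myint{p(u,v)}=\myint{p(Tu,Tv)}\leq\alpha\myint{p(u,v)},$$
and since $\alpha\in(0,1)$ and the integral is finite by $(\Phi 2)$, this forces $\myint{p(u,v)}=0$. By $(\Phi 2)$ again ($\myint{\varepsilon}>0$ for every $\varepsilon>0$), we conclude $p(u,v)=0$. The same argument applied to the pair $(u,u)$ gives $p(u,u)=0$. Hence $p(u,u)=p(u,v)=0$, and Lemma \ref{1} yields $u=v$.

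Combining the two parts, $T$ has a unique fixed point $u\in X$ and $T^nx\stackrel{p}\longrightarrow u$ for every $x\in X$, so $T$ is a $p$-Picard operator. I expect no serious obstacle: the only delicate point is ensuring we actually invoke $(\Phi 2)$ correctly to deduce $p(u,v)=0$ from the vanishing of the integral, and then noting that Lemma \ref{1} needs the auxiliary equality $p(u,u)=0$ (not merely $p(u,v)=0$) because $E$-distances need not be symmetric or satisfy $p(x,x)=0$ a priori.
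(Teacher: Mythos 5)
Your proposal is correct and follows essentially the same route as the paper: invoke Theorem \ref{contfix} with $G=G_0$ to get that $T$ is weakly $p$-Picard, then apply the contractive inequality to a pair of fixed points to force $p(u,v)=p(u,u)=0$ and conclude via Lemma \ref{1}. Your explicit remarks on why $(\ast)$ holds trivially and why $p(u,u)=0$ is also needed for Lemma \ref{1} are accurate refinements of the paper's more terse argument.
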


\begin{proof}
By Theorem \ref{contfix}, the mapping $T$ is a weakly $p$-Picard operator. To complete the proof, it suffices to show that $T$ has a unique fixed point. To this end, let $x$ and $y$ be two fixed points for $T$. Then
$$\myint{p(x,y)}=\myint{p(Tx,Ty)}\leq\alpha\myint{p(x,y)},$$
which is impossible unless $p(x,y)=0$. Similarly, one can show that $p(x,x)=0$ and since $X$ is separated, it follows by Lemma \ref{1} that $x=y$.
\end{proof}

Because $\widetilde{G_1}=\widetilde{G_2}=G_2$, setting $G=G_1$ or $G=G_2$ in Theorem \ref{contfix}, we obtain the ordered version of Branciari's result as follows:

\begin{cor}
Let $p$ be an $E$-distance on a partially ordered separated uniform space $X$ such that $X$ is $p$-complete and a mapping $T:X\to X$ satisfy
$$\myint{p(Tx,Ty)}\leq\alpha\myint{p(x,y)}$$
for all comparable elements $x$ and $y$ of $X$, where $\varphi\in\Phi$ and $\alpha\in(0,1)$. Assume that there exists an $x\in X$ such that $T^mx$ and $T^nx$ are comparable for all $m,n\geq0$. Then $T$ is a weakly $p$-Picard operator if one of the following statements holds:
\begin{itemize}[label={$-$}]
\item $T$ is orbitally $p$-$G_2$-continuous on $X$;
\item $X$ satisfies the following property:
\begin{itemize}
\item[] If a sequence $\{x_n\}$ in $X$ with successive comparable terms is $p$-convergent to an $x\in X$, then $x$ is comparable to $x_n$ for all $n\geq1$.
\end{itemize}
\end{itemize}
\end{cor}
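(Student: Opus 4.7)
The plan is to deduce this corollary directly from Theorem \ref{contfix} by specializing the graph to $G=G_1$ (or equivalently $G=G_2$), exploiting the identity $\widetilde{G_1}=\widetilde{G_2}=G_2$ observed in the paragraph preceding the statement. Under this choice, being an integral type $p$-$\widetilde G$-contraction amounts to being a comparability-preserving mapping that satisfies (IC2) for every comparable pair $(x,y)$, and this is exactly the contractive hypothesis of the corollary.

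The remaining hypotheses translate through the Remark preceding the corollary. First, the assumption that there exists $x\in X$ with $T^mx$ and $T^nx$ comparable for all $m,n\ge 0$ is, by that remark, precisely the statement that $C_T\ne\emptyset$. Next, clause (i) of the corollary (orbital $p$-$G_2$-continuity) matches clause (i) of Theorem \ref{contfix} verbatim because $\widetilde G=G_2$. For clause (ii), suppose $\{x_n\}$ has successive comparable terms, i.e.\ $(x_n,x_{n+1})\in E(\widetilde G)$ for every $n$, and $p$-converges to some $x\in X$; the stated order-theoretic property then yields $(x_n,x)\in E(\widetilde G)$ for every $n\ge 1$, which is strictly stronger than Property $(\ast)$, the latter demanding such an edge only along a subsequence.

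Once these hypotheses are matched, Theorem \ref{contfix} applies and gives that $T\mid_{C_T}$ is a weakly $p$-Picard operator, so the orbit of the distinguished $x\in C_T$ (and of every other point in $C_T$) $p$-converges to a fixed point of $T$; in particular $\fix(T)\ne\emptyset$. I do not expect any genuine obstacle here—the argument is essentially bookkeeping. The only mild subtlety worth flagging is that the corollary's conclusion ``$T$ is a weakly $p$-Picard operator'' has to be understood in the restricted sense provided by Theorem \ref{contfix}, namely the weak $p$-Picard property of $T\mid_{C_T}$, since without extra structure one cannot extend the orbit convergence to arbitrary starting points outside $C_T$.
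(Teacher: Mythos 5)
Your proposal is correct and is exactly the paper's route: the corollary is obtained by setting $G=G_1$ or $G=G_2$ in Theorem \ref{contfix}, using $\widetilde{G_1}=\widetilde{G_2}=G_2$ and the preceding Remark to translate the comparability hypotheses, which is all the paper itself offers by way of proof. Your added caveat that the conclusion must be read as the weak $p$-Picard property of $T\mid_{C_T}$ is a fair and accurate observation about the statement.
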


Next, we are going to prove two theorems on uniqueness of the fixed points for integral type $p$-$\widetilde G$-contractions.

\begin{thm}
Let $p$ be an $E$-distance on a separated uniform space $X$ endowed with a graph $G$ such that $X$ is $p$-complete, and let $T:X\to X$ be an integral type $p$-$\widetilde G$-contraction such that the function $\varphi$ in (IC2) satisfies
\begin{equation}\label{subadd}
\myint{a+b}\leq\myint{a}+\myint{b}
\end{equation}
for all $a,b\geq0$. If $G$ is weakly connected and $C_T$ is nonempty, then there exists a unique $u\in X$ such that $T^nx\stackrel{p}\longrightarrow u$ for all $x\in X$. In particular, $T$ is a $p$-Picard operator if and only if $\fix(T)$ is nonempty.
\end{thm}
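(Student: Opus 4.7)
The plan is to first identify the common limit $u$ by iterating $T$ from a single point $x_0\in C_T$, and then to transfer convergence to every other $x\in X$ via a path in $\widetilde G$ supplied by weak connectivity. The subadditivity of $\myint{\cdot}$ is the key tool that will let this transfer pass through the integrated contractive condition without ever leaving the integral formulation.

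I would begin by fixing any $x_0\in C_T$ (nonempty by hypothesis). By Lemma \ref{step2}, the orbit $\{T^n x_0\}$ is $p$-Cauchy, and the $p$-completeness of $X$ provides a $u\in X$ with $T^n x_0\stackrel{p}\longrightarrow u$; this $u$ will be the candidate common limit.

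Next, for arbitrary $x\in X$, weak connectivity yields a path $x=y_0,y_1,\ldots,y_N=x_0$ in $\widetilde G$. Since $T$ preserves the edges of $\widetilde G$, each pair $(T^n y_{i-1},T^n y_i)$ lies in $E(\widetilde G)$, and iterating (IC2) $n$ times gives $\myint{p(T^n y_{i-1},T^n y_i)}\leq\alpha^n\myint{p(y_{i-1},y_i)}$ for every $i$. Combining the triangle inequality for $p$ (and the monotonicity of $\myint{\cdot}$ in its upper limit, which holds since $\varphi\geq 0$) with the iterated form of the subadditivity hypothesis produces
$$\myint{p(T^n x,T^n x_0)}\leq\sum_{i=1}^{N}\myint{p(T^n y_{i-1},T^n y_i)}\leq\alpha^n\sum_{i=1}^{N}\myint{p(y_{i-1},y_i)}\longrightarrow 0.$$
Lemma \ref{phi}(1) then yields $p(T^n x,T^n x_0)\to 0$, and another use of the triangle inequality for $p$, together with $p(T^n x_0,u)\to 0$, delivers $T^n x\stackrel{p}\longrightarrow u$. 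Uniqueness of $u$ is immediate from Lemma \ref{1} since $X$ is separated.

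For the final equivalence, one direction is tautological. For the converse, given $v\in\fix(T)$ we have $v\in C_T$, so $v=T^n v\stackrel{p}\longrightarrow u$, forcing $p(v,u)=0$; applying (IC2) to the loop $(v,v)\in E(\widetilde G)$ yields $\myint{p(v,v)}\leq\alpha\myint{p(v,v)}$, hence $p(v,v)=0$, and Lemma \ref{1} concludes $v=u$. I expect the principal technical point to be the displayed chain of inequalities: this is precisely where the subadditivity hypothesis on $\myint{\cdot}$ is essential, as it allows the contractive condition to be accumulated over the entire path inside a single integral, rather than having to exit via Lemma \ref{phi}(1) edge by edge before reassembling.
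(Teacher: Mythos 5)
Your proposal is correct and follows essentially the same route as the paper: Lemma \ref{step2} plus $p$-completeness to obtain $u$ from a point of $C_T$, then a path from weak connectivity combined with the subadditivity hypothesis and (IC2) to get $\myint{p(T^nx,T^ny)}\leq\alpha^n\sum_i\myint{p(y_{i-1},y_i)}\to0$, followed by Lemma \ref{phi}, the triangle inequality, and Lemma \ref{1}. Your explicit verification of the final ``$p$-Picard iff $\fix(T)\neq\emptyset$'' clause (using the loop $(v,v)$ to get $p(v,v)=0$ before invoking Lemma \ref{1}) is a detail the paper leaves implicit, and it is handled correctly.
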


\begin{proof}
Let $x$ and $y$ be two arbitrary elements of $X$. Since $G$ is weakly connected, there exists a path $(x_i)_{i=0}^N$ in $\widetilde G$ from $x$ to $y$. Since $T$ preserves the edges of $\widetilde G$, it follows that $(T^nx_{i-1},T^nx_i)\in E(\widetilde G)$ for all $n\geq0$ and $i=1,\ldots,N$. Therefore, by \eqref{subadd} and (IC2) we have
\begin{eqnarray*}
\myint{p(T^nx,T^ny)}&\leq&\myint{\sum_{i=1}^Np(T^nx_{i-1},T^nx_i)}\cr\\
&\leq&\sum_{i=1}^N\myint{p(T^nx_{i-1},T^nx_i)}\cr\\
&\leq&\alpha\sum_{i=1}^N\myint{p(T^{n-1}x_{i-1},T^{n-1}x_i)}\cr\\
&\vdots&\cr\\[-5mm]
&\leq&\alpha^n\sum_{i=1}^N\myint{p(x_{i-1},x_i)}
\end{eqnarray*}
for all $n\geq0$, where $\varphi\in\Phi$ and $\alpha\in(0,1)$ are as in (IC2). Since, by $(\Phi2)$, $\sum_{i=1}^N\myint{p(x_{i-1},x_i)}$ is finite (possibly zero), it follows immediately that $\myint{p(T^nx,T^ny)}\rightarrow0$. Hence by Lemma \ref{phi}, $p(T^nx,T^ny)\rightarrow0$.\par
Now, pick a point $x\in C_T$. By Lemma \ref{step2}, the sequence $\{T^nx\}$ is $p$-Cauchy in $X$ and since $X$ is $p$-complete, there exists a $u\in X$ such that $T^nx\stackrel{p}\longrightarrow u$. If $y$ is an arbitrary point in $X$, then
$$0\leq p(T^ny,u)\leq p(T^ny,T^nx)+p(T^nx,u)\rightarrow0\quad\text{as}\quad n\rightarrow\infty.$$
So $T^ny\stackrel{p}\longrightarrow u$. The uniqueness of $u$ follows immediately from Lemma \ref{1}.
\end{proof}

\begin{thm}\label{thm4}
Let $p$ be an $E$-distance on a separated uniform space $X$ endowed with a graph $G$ and $T:X\to X$ be an integral type $p$-$\widetilde G$-contraction. If the subgraph of $G$ with the vertices $\fix(T)$ is weakly connected, then $T$ has at most one fixed point in $X$.
\end{thm}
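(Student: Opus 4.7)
The plan is to fix two arbitrary fixed points $x,y\in\fix(T)$ and exploit the weak connectedness of the subgraph on $\fix(T)$ to produce a path $(x_i)_{i=0}^N$ in $\widetilde G$ from $x_0=x$ to $x_N=y$ with every $x_i\in\fix(T)$. I will then show that each consecutive pair $x_{i-1},x_i$ in this path must actually be equal, which collapses the path and yields $x=y$.

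To prove $x_{i-1}=x_i$ I will use Lemma \ref{1}, which requires exhibiting some $z\in X$ with $p(z,x_{i-1})=p(z,x_i)=0$. The natural choice is $z=x_{i-1}$, so I must check both $p(x_{i-1},x_i)=0$ and $p(x_{i-1},x_{i-1})=0$. For the first, I use the fact that $(x_{i-1},x_i)\in E(\widetilde G)$ together with $Tx_{i-1}=x_{i-1}$ and $Tx_i=x_i$: contractivity (IC2) gives
$$\myint{p(x_{i-1},x_i)}=\myint{p(Tx_{i-1},Tx_i)}\leq\alpha\myint{p(x_{i-1},x_i)},$$
so the integral vanishes, and property $(\Phi 2)$ of the class $\Phi$ then forces $p(x_{i-1},x_i)=0$. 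For the second, I apply the same reasoning to the loop $(x_{i-1},x_{i-1})\in\Delta(X)\subseteq E(G)\subseteq E(\widetilde G)$: (IC2) yields $\myint{p(x_{i-1},x_{i-1})}\leq\alpha\myint{p(x_{i-1},x_{i-1})}$, and $(\Phi 2)$ again gives $p(x_{i-1},x_{i-1})=0$.

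With these two equalities, Lemma \ref{1} delivers $x_{i-1}=x_i$ for every $i=1,\dots,N$, hence $x=x_0=x_N=y$ by a trivial induction along the path. The only point where I need to be a little careful is the second step: an $E$-distance need not vanish on the diagonal by itself, so I genuinely need to invoke (IC2) on the loop at $x_{i-1}$ (which is why the hypothesis $E(G)\supseteq\Delta(X)$ matters). Beyond that minor bookkeeping, the argument is routine and there is no real obstacle.
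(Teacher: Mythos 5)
Your proof is correct, and it finishes the argument by a slightly different route than the paper. Both proofs begin identically: take a path $(x_i)_{i=0}^N$ in $\widetilde G$ through $\fix(T)$ and use (IC2) on each edge to force $\myint{p(x_{i-1},x_i)}\leq\alpha\myint{p(x_{i-1},x_i)}$, hence $p(x_{i-1},x_i)=0$ by $(\Phi2)$. The difference is in how the path is collapsed. The paper never establishes $p(x_i,x_i)=0$; instead it uses the symmetry of $E(\widetilde G)$ to also get $p(x_i,x_{i-1})=0$, pads the path to even length using loops, and applies Lemma \ref{1} with the \emph{middle} vertex $x_{2j+1}$ as the point $z$ (since $p(x_{2j+1},x_{2j})=p(x_{2j+1},x_{2j+2})=0$), concluding $x_0=x_2=\cdots=x_N$. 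You instead apply (IC2) to the loop $(x_{i-1},x_{i-1})\in\Delta(X)\subseteq E(\widetilde G)$ to get $p(x_{i-1},x_{i-1})=0$ directly, which lets you invoke Lemma \ref{1} with $z=x_{i-1}$ and conclude $x_{i-1}=x_i$ edge by edge. Your version avoids the parity bookkeeping and the symmetry step entirely, at the cost of one extra application of (IC2) per vertex; both arguments ultimately lean on the standing hypothesis $E(G)\supseteq\Delta(X)$, the paper to pad the path to even length and you to contract along the diagonal. Your observation that an $E$-distance need not vanish on the diagonal, so that this must be derived rather than assumed, is exactly the right point of care.
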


\begin{proof}
Let $x$ and $y$ be two fixed points for $T$. Then there exists a path $(x_i)_{i=0}^N$ in $\widetilde G$ from $x$ to $y$ such that $x_1,\ldots,x_{N-1}\in\fix(T)$. Since $E(\widetilde G)$ contains all loops, we can assume without loss of generality that the length of this path, that is, the integer $N$ is even. Now, by (IC2) we have
$$\myint{p(x_{i-1},x_i)}=\myint{p(Tx_{i-1},Tx_i)}\leq\alpha\myint{p(x_{i-1},x_i)}\qquad i=1,\ldots,N,$$
where $\varphi\in\Phi$ and $\alpha\in(0,1)$, which is impossible unless $\myint{p(x_{i-1},x_i)}=0$ or equivalently, $p(x_{i-1},x_i)=0$ for $i=1,\ldots,N$. Because $E(\widetilde G)$ is symmetric, a similar argument yields $p(x_i,x_{i-1})=0$ for $i=1,\ldots,N$. Since $N$ is even, using Lemma \ref{1} finitely many times, we get $x=x_0=x_2=\cdots=x_N=y$. Consequently, $T$ has at most one fixed point in $X$.
\end{proof}

\begin{rem}
Theorem \ref{thm4} guarantees that in a separated uniform space $X$ endowed with a graph $G$ and an $E$-distance $p$, if $(x,y)\in E(G)$, then both $x$ and $y$ cannot be a fixed point for any integral type $p$-$\widetilde G$-contraction $T$. In other words, each weakly connected component of $G$ intersects $\fix(T)$ in at most one point. So in partially ordered separated uniform spaces equipped with an $E$-distance $p$, no ordered integral type $p$-contraction has two comparable fixed points.
\end{rem}

\begin{rem}
Since the Riemann integral (proper and improper) is subsumed in the Lebesgue integral, it follows that one may replace Lebesgue-integrability with Riemann-integrability of $\varphi$ on $[0,+\infty)$ in $(\Phi1)$, where the value of the integral on $[0,+\infty)$ is allowed to be $\infty$. Facing with Riemann integrals, we should assume that the function $\varphi$ is bounded. Therefore, all of the results of this paper can be restated and reproved for Riemann integrals instead of Lebesgue integrals. A similar remark holds for Riemann-Stieltjes integrable functions with respect to any fixed nondecreasing function on $[0,+\infty)$.
\end{rem}

\end{document}